\numberwithin{equation}{section}
\def\m{\medskip}
\theoremstyle{plain} 
\newtheorem{thm}{Theorem}[section]
\newtheorem{cor}[thm]{Corollary} 
\newtheorem{lemma}[thm]{Lemma}
\newtheorem{op}[thm]{Open Question} 
\newtheorem{prop}[thm]{Proposition} 
\newcommand\theoref{Theorem~\ref} 
\newcommand\lemref{Lemma~\ref} 
\newcommand\propref{Proposition~\ref} 
\newcommand\corref{Corollary~\ref} 
\newcommand\secref{Section~\ref}
\theoremstyle{definition} 
\newtheorem{defin}[thm]{Definition}
\newtheorem{rem}[thm]{Remark} 
\newtheorem{ex}[thm]{Example}
\def\p{{\noindent \it Proof. }}
\def\ga{\alpha}
\def\C{{\mathbb C}} 
\def\P{{\mathbb P}} 
\def\Z{{\mathbb Z}} 
\def\R{{\mathbb R}}
\def\rp{{\R \P}} 
\def\cp{{\C \P}} 
\def\crit{\operatorname{Crit}}
\def\critset{\operatorname{critset}}
\def\Crit{\operatorname{Crit}}
\def\cat{\operatorname{cat}}
\def\cl{\operatorname{cl}}
\def\ts{\times}
\def\wt{\widetilde}
\def\pa{\partial}
\long\def\forget#1\forgotten{} %
\begin{document}

\title[Critical Points]{Maps of Degree 1, Lusternik--Schnirelmann Category, and Critical Points}

\author[D. Kundu and Yu.~Rudyak]{Deep Kundu and Yuli B. Rudyak}

\address{Deep Kundu \newline
Department of Mathematics, University of Florida \newline
358 Little Hall, Gainesville, FL 32611-8105, USA}
\email{deepkundu@ufl.edu}

\address{Yuli B. Rudyak \newline
Department of Mathematics, University of Florida \newline
358 Little Hall, Gainesville, FL 32611-8105, USA}
\email{rudyak@ufl.edu}

\maketitle
\begin{abstract}
Let $\crit M$ denote the minimal number of critical points (not necessarily non-degenerate) on a closed smooth manifold $M$. We are interested in the evaluation of $\crit$. It is worth noting that we do not know yet whether $\crit M$ is a homotopy invariant of $M$. This makes the research of $\crit$ a challenging problem.
 
In particular, we pose the following question: given a map $f: M \to N$ of degree 1 of closed manifolds, is it true that $\crit M \geq \crit N$?  We prove that this holds in dimension 3 or less. Some high dimension examples are considered.  Note also that an affirmative answer to the question implies the homotopy invariance of $\crit$; this simple observation is a good motivation for the research.
\end{abstract}

\section{Introduction}
It is well-known that problems of extrema and, in particular, research of critical points of real-valued functions is one of the central problems of mathematics.
In this paper, we discuss critical points of smooth functions on smooth manifolds.  In particular, we are interested in evaluating the minimal number $\crit M$ of critical points on a closed smooth manifold $M$. Unlike general critical points, the theory of non-degenerate critical points is well developed, see~\cite{Mil, Sma1, Sma2,Wal}. So, we are not focused on non-degenerate critical points only.  It is worth noting that we do not know yet whether $\crit M$ is a homotopy invariant of $M$. This makes the research of $\crit M$ a challenging problem.

The second author posed the following question: Let $f: M \to N$ be a map of degree 1 of closed smooth manifolds. Do we always have an inequality $\crit M \geq \Crit N$? It is worth noting that an affirmative answer to this question implies the homotopy invariance of $\crit$. In this paper we prove that the question has a positive answer for $\dim M,N\leq 3$, see \secref{main}.

\m Below the word 'smooth" denotes "$C^{\infty}$". All manifolds are assumed to be smooth and connected. The sign $\cong$ denotes the homotopy of maps or homotopy equivalence of spaces. The sign $\#$ denotes the connected sum of two manifolds.

\section{Preliminaries. Critical points.}

\begin{defin}
Let $f:M\to \R$ be a smooth function on a closed manifold $M$. A point $p\in M$ is called a {\em critical point} of $f$ if $df(p)=0$. Let $\critset f$ be the set of critical points of $f$, and $\crit f$ the cardinality of $\critset f$. Set $\Crit M=\min\{\crit f\}$ where $f$ runs over all smooth functions $f:M \to \R$.
\end{defin}

\begin{defin}Given a local coordinate system $\{x^1, \ldots, x^n\}$ in a neighborhood of a critical point $p\in M^n$ of $f$, the critical point $p$ is called {\em non-degenerate} if the matrix
\[
\left(\frac{\pa^2f}{\pa x^i\pa x^j}(p)\right)
\]
is non-singular. A smooth function on a manifold is a {\em Morse function} if it has no degenerate critical points.
\end{defin}

\begin{rem}
It is worth noting that there is a grave difference between non-degenerate and general critical points. 
For example, any closed surface admits a function with 3 critical points, while an oriented surface of genus $g$ possesses at least $2g$ non-degenerate critical points.
\end{rem}

\begin{rem}The number $\Crit M$ is finite for all closed manifolds $M$. Indeed, every manifold $M$ admits a Morse function, say, $f$~\cite{Mil}. So all critical points of $f$ are non-degenerate and therefore isolated. Now recall that $M$ is closed (and connected by default). 
\end{rem}

\m This claim can be strengthened as follows (Takens~\cite{Tak}).

\begin{thm}\label{tak}
Let $M$ be a closed connected manifold. Then $\Crit M\leq \dim(M)+1$.
\end{thm}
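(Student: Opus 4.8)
The cases $\dim M\le1$ are immediate, since then $M$ is a point or a circle and $\crit M$ is $1$ or $2$; so assume $n:=\dim M\ge2$. The plan is to begin with a Morse function on $M$ and to reduce its number of critical points by \emph{fusing}, one index at a time, all the critical points of a given index into a single (necessarily degenerate) one. After the usual rearrangement one may take the Morse function $f$ self-indexing (Milnor~\cite{Mil}), so that for every $k$ the critical points of index $k$ are exactly those on the level $f^{-1}(k)$. It then suffices to prove the following amalgamation statement: if a smooth function $g$ on a manifold has finitely many critical points, $m\ge2$ of which, say $p_1,\dots,p_m$, all have index $k$ and lie on a common critical level, then there is a smooth function $g'$ that agrees with $g$ outside a compact region $U$ containing no critical point of $g$ other than $p_1,\dots,p_m$, and such that $g'$ has exactly one critical point in $U$; then $\crit g'=\crit g-(m-1)$. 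Granting this, one applies it successively to each index $k$ that carries more than one critical point, at the $k$th stage choosing $U$ to avoid the finitely many critical points present at that moment (possible because $n\ge2$ and $M$ is connected). The result is a smooth function on $M$ with at most one critical point for each of the indices $0,1,\dots,n$, hence at most $n+1=\dim M+1$ critical points, which is the assertion.

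To prove the amalgamation statement, I would first fix pairwise disjoint Morse charts $B_i\cong D^n$ about the $p_i$ in which $g=c+q_k$, where $c$ is the common critical value and $q_k(x)=-x_1^2-\cdots-x_k^2+x_{k+1}^2+\cdots+x_n^2$; it is here that the hypothesis ``all of index $k$'' enters, for it makes every one of these local models literally the same. Next I would join $p_1,\dots,p_m$ by a smooth embedded tree $T\subset M$ whose interior misses the critical set, and take a regular neighborhood $U\supset T$ small enough to contain no critical point but $p_1,\dots,p_m$; since $T$ is a contractible graph, $U\cong D^n$. Now $g$ is prescribed near $\partial U$ (where it must not be changed) and on $U\cap B_i$, and the task is to extend it across $U$ with a single critical point. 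Morse-theoretically, as the sublevel set $\{g'\le t\}$ sweeps through $U$ it must acquire $m$ handles of index $k$; the claim is that a single degenerate critical point can accomplish this, its local model being obtained by ``tubing together'' the $m$ copies of $q_k$ along $T$ and grafting the result to the prescribed boundary data by cut-off functions.

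I expect this last extension step to be the main obstacle: one must produce a genuinely $C^\infty$ function on $U\cong D^n$ that agrees with $g$ near $\partial U$, realises the attachment of the $m$ index-$k$ handles, and has exactly one critical point, checking in particular that no spurious critical points are introduced where the cut-offs switch on (which is why $T$ and $U$ must be chosen with care). The point that makes such a ``fat handle'' possible, and the only place the equal-index hypothesis is used, is that the $m$ standard models $q_k$ can be amalgamated consistently; for critical points of differing indices no analogous merging exists. The remaining ingredients---the existence of a self-indexing Morse function together with the rearrangement of critical levels (Milnor~\cite{Mil}), the Morse lemma, and the regular neighborhood theorem---are all standard.
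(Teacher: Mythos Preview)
The paper does not give a proof of this theorem at all; it simply cites Takens~\cite{Tak} (Proposition~2.9) and \cite[Proposition~7.26]{CLOT}. So there is no argument in the paper to compare against, only a reference.

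Your outline is, in fact, a reasonable sketch of Takens' own strategy: start from a self-indexing Morse function and, index by index, replace the several nondegenerate critical points of a given index by a single (degenerate) one, ending with at most $n+1$ critical points. The iterative scheme you describe (modify inside a compact set missing all other critical points, then move to the next index) is sound, and your handling of the low-dimensional cases and of connectivity is fine.

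Where your write-up is genuinely incomplete is exactly where you flag it: the amalgamation step. Two remarks on that. First, your chosen mechanism---connect $p_1,\dots,p_m$ by a tree $T$, thicken to $U\cong D^n$, and then try to extend the given boundary data over $U$ with one critical point---is not how Takens organises the construction, and it creates an unnecessary difficulty: the restriction $g|_{\partial U}$ is whatever the ambient Morse function hands you, and showing that \emph{those} boundary values extend with a single interior critical point is awkward to verify directly. Takens instead works with the elementary cobordism $W=f^{-1}[k-\varepsilon,k+\varepsilon]$, which is $f^{-1}(k-\varepsilon)\times[0,1]$ with $m$ index-$k$ handles attached simultaneously; he builds, essentially by hand, a function on $W$ equal to the projection near $\partial W$ and having exactly one interior critical point (this is the content behind \cite[Prop.~2.9]{Tak} and \cite[Prop.~7.26]{CLOT}). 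The equal-index hypothesis enters precisely so that the $m$ handle cores can be coned off coherently to a single point. Second, ``grafting by cut-off functions'' is the step most prone to introducing spurious critical points; in Takens' construction this is avoided because the model is written down explicitly on the handlebody rather than patched inside an uncontrolled ball. If you want to complete your version, I would recast the amalgamation as a statement about the elementary cobordism rather than about a disk neighbourhood of a tree; the former has exactly the structure needed to write the model function directly.
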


\begin{proof}
See \cite[Propositiom 2.9 and the following text]{Tak}, cf. also~\cite[Proposition 7.26]{CLOT}.
\end{proof}

\begin{op}
Is $\crit$ a homotopy invariant? In other words, let $M,N$ be two closed manifolds. If $M$ is homotopy equivalent to $N$, is it always true that $\crit M =\crit N$?
\end{op}

\begin{ex}
If $M$ is a closed manifold that is homotopy equivalent to the sphere $S^n, n\neq 4$, then $\crit M=2$. \\
The case $n=2$ is classical, see e.g. \cite{Mun2}. \\
The case $n=3$ is a famous theorem of Perelman (every homotopy 3-sphere is diffeomorphic to $S^3$), see e.g. \cite{Mor}\\ 
The case $n>4$ is exposed in \cite[Theorem H]{Sma2}.\\
The case $n=4$ is an open question.
\end{ex}

More examples later, in section \secref{LuS}.

\m There is a refining of \theoref{tak}.

\begin{thm}\label{t:refin}
Let $M$ be a $p$-connected closed manifold $M$ of dimension $n$, $p\geq 1, n\geq 6$. Let $r(M)\subset \Z$ be such that it contains all $i\in \Z$ with the property that $H_i(M;\Z)\neq 0$ or $H^i(M;\Z)\neq 0$. If $r(M)$ is included in the union of $s$ closed intervals of length $p$, then $\crit  M\leq s$,
\end{thm}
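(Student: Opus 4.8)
The plan is to produce a smooth function $f\colon M\to\R$ with exactly $s$ critical points, whence $\crit M\le\crit f=s$; this refines Takens' proof of \theoref{tak} by letting each of the $s$ intervals, rather than each handle-dimension, account for a single critical point. For the setup, observe that $0,n\in r(M)$ (because $H_0(M)\neq0$ and $H^n(M)\neq0$) and that, by Poincar\'e duality (with twisted coefficients if $M$ is non-orientable), $r(M)$ is symmetric about $n/2$; hence $r(M)$ can be covered by $s$ pairwise disjoint closed intervals $I_1<\dots<I_s$ of length $p$ whose family is symmetric about $n/2$, with $I_1\ni0$ and $I_s\ni n$. Writing $I_k=[a_k,b_k]$, note $I_1\subseteq[-p,p]$ and $I_s\subseteq[n-p,n+p]$.

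\m
\emph{An economical handle decomposition.} Since $M$ is $p$-connected, $p\ge1$, and $n=\dim M\ge6$, Smale--Wall handle calculus (cancelling and trading handles by the Whitney trick, available in the simply connected middle range) yields a \emph{minimal} handle decomposition of $M$, one whose handle chain complex realises $H_*(M;\Z)$ with the least possible number of generators in each degree; in particular there are no handles of dimension in $[1,p]\cup[n-p,n-1]$, so the only handles with dimension in $I_1$ resp.\ $I_s$ are the $0$-handle resp.\ the $n$-handle. A handle of dimension $j$ in such a decomposition occurs only if $H_j(M)\neq0$ or $H_{j-1}(M)$ has torsion. If $H_j(M)\neq0$ then $j\in r(M)\subseteq\bigcup_kI_k$. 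If $H_{j-1}(M)$ has torsion, then $H_{n-j}(M)$ has torsion by Poincar\'e duality, so $n-j\in r(M)$ lies in some $I_m$; but then $j$ lies in the mirror interval $n-I_m$, which belongs to our symmetric family, so again $j\in\bigcup_kI_k$. Hence every handle of the decomposition has dimension in $\bigcup_kI_k$.

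\m
\emph{From handles to $s$ critical points.} Realise this decomposition by a self-indexing Morse function and cut at regular values $c_0<\dots<c_s$ separating the $s$ clusters of handles, so that $\emptyset=V_0\subset\dots\subset V_s=M$ with $V_k=f^{-1}(-\infty,c_k]$, and $W_k:=\overline{V_k\setminus V_{k-1}}$ is a cobordism obtained by attaching to a collar of $f^{-1}(c_{k-1})$ only handles of dimension in $I_k$. Now invoke Takens' amalgamation technique \cite{Tak} (see also \cite{CLOT}): for $1<k<s$, the handles of $W_k$ --- all of dimension in the width-$p$ window $I_k\subseteq[p+1,n-p-1]$ --- can first be slid to a common level and then absorbed into a single, in general degenerate, critical point. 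The slide to a common level is exactly where the width bound $p$ together with the $p$-connectivity of $M$ (hence of the intermediate level sets) is used, since handles of distinct index cannot be separated by a dimension count and one must instead push their null-homotopic attaching spheres off the belt spheres. For $W_1$ and $W_s$, which are $n$-discs carrying only the $0$-handle resp.\ the $n$-handle, a single minimum resp.\ maximum suffices. Gluing the $s$ resulting functions along the regular level sets $f^{-1}(c_k)$ and smoothing produces $f\colon M\to\R$ with exactly $s$ critical points, so $\crit M\le s$.

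\m
The step I expect to be the main obstacle is the amalgamation for the middle blocks --- both arranging all the handles of $W_k$ at one level and constructing the degenerate local model that absorbs the prescribed attaching data --- which is precisely where the hypotheses $n\ge6$, ``$p$-connected'', and ``width $p$'' are essential. The only other delicate point is the combinatorics of the symmetric disjoint covering together with the Poincar\'e-duality bookkeeping of the middle step, which in the non-orientable case uses duality with twisted coefficients.
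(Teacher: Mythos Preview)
The paper does not actually prove this theorem: its ``proof'' is a bare citation to \cite[Theorem~5.1]{Tak} (and \cite[Proposition~7.28(i)]{CLOT}). Your proposal is a faithful sketch of Takens' argument --- minimal Smale handle decomposition followed by amalgamation of the handles in each interval into a single (degenerate) critical point --- so in that sense you and the paper agree, only you have unpacked what the paper leaves to the reference.

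Two small simplifications are worth noting. First, since $M$ is $p$-connected with $p\ge 1$, it is simply connected and hence orientable; your caveat about twisted-coefficient Poincar\'e duality in the non-orientable case is unnecessary. Second, your detour through Poincar\'e duality and the ``mirror interval'' $n-I_m$ to locate the torsion-induced handles is more elaborate than needed: if $H_{j-1}(M;\Z)$ has torsion, then by the universal coefficient theorem $H^j(M;\Z)\neq 0$, so $j\in r(M)$ directly by the definition of $r(M)$. This removes any need to arrange the covering intervals symmetrically about $n/2$. With these simplifications the bookkeeping collapses, and the only genuinely hard step --- as you correctly flag --- is Takens' amalgamation of a width-$p$ cluster of handles into a single critical point, which is exactly where $p$-connectivity and $n\ge 6$ enter.
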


\begin{proof} 
See \cite[Theorem 5.1]{Tak}, cf. also~\cite[Proposition 7.28(i)]{CLOT}.
\end{proof}

 \begin{prop}\label{p:deg}
 Let $f \colon M\to N$ be a map of degree $\pm1$ $($of closed connected orientable manifolds$)$. Then the map $f_* \colon H_*(M) \to H_* (N) $ is an epimorphism and 
$f^* \colon H^* (N)  \to H^*(M)$ is a monomorphism. Furthermore, the map $f_* \colon \pi_1(M)\to \pi_1 (N) $ is an epimorphism.
 \end{prop}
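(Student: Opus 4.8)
The plan is to prove the three assertions in order, using the basic properties of the degree and a transfer/intersection argument in homology.

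First I would handle $f_*\colon H_*(M)\to H_*(N)$. The key input is that for a degree-$\pm1$ map $f\colon M\to N$ of closed oriented $n$-manifolds we have $f_*[M] = \pm[N]$, where $[M]\in H_n(M)$ and $[N]\in H_n(N)$ are the fundamental classes. The standard argument is a Poincar\'e-duality "wrong-way" computation: given $\alpha\in H_k(N)$, write $\alpha = [N]\cap x$ for the unique $x\in H^{n-k}(N)$ (Poincar\'e duality for $N$). Then using the projection formula $f_*\bigl([M]\cap f^*x\bigr) = \bigl(f_*[M]\bigr)\cap x = \pm[N]\cap x = \pm\alpha$. Hence $\alpha$ lies in the image of $f_*$, so $f_*$ is onto. (One can phrase this without explicitly introducing an umkehr map, just via the projection formula $f_*(a\cap f^*b) = f_*a \cap b$.) Dually, for cohomology: given $u\in H^k(N)$ with $f^*u=0$, apply Poincar\'e duality on both sides. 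We have $[M]\cap f^*u = 0$, and by the projection formula $f_*([M]\cap f^*u) = \pm[N]\cap u$, so $[N]\cap u = 0$, and since capping with $[N]$ is an isomorphism $H^k(N)\to H_{n-k}(N)$, we get $u=0$. Thus $f^*$ is injective. Actually the injectivity of $f^*$ is also a purely formal consequence of the surjectivity of $f_*$ together with the universal coefficient theorem, or one can note $f^*$ is split injective with splitting the Poincar\'e-dual transfer — either route is short.

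Next I would prove $f_*\colon\pi_1(M)\to\pi_1(N)$ is onto. Let $G = f_*\pi_1(M)\le\pi_1(N)$ and suppose $G$ is a proper subgroup; let $\pi\colon\widetilde N\to N$ be the covering corresponding to $G$, which has some number of sheets $d>1$ (possibly infinite). Since $f_*\pi_1(M)\subseteq G$, the map $f$ lifts to $\widetilde f\colon M\to\widetilde N$ with $\pi\circ\widetilde f = f$. Now compute degrees / fundamental classes: applying $H_n$ and using $f_*[M] = \pm[N]$, we get $\pi_*\widetilde f_*[M] = \pm[N]$. The main point is then that $\pi_*\colon H_n(\widetilde N)\to H_n(N)$ cannot hit a generator when $d>1$. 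If $d$ is finite, $\widetilde N$ is a closed oriented $n$-manifold and $\pi$ has degree $d$, so $\pi_*\widetilde f_*[M]$ is $d$ times an integer multiple of $[N]$ — but it equals $\pm[N]$, forcing $d\mid 1$, a contradiction. If $d$ is infinite, $\widetilde N$ is a noncompact $n$-manifold, so $H_n(\widetilde N;\Z)=0$ and $\pi_*\widetilde f_*[M]=0\ne\pm[N]$, again a contradiction. Hence $G=\pi_1(N)$.

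The main obstacle — really the only nonroutine point — is getting the precise behavior of the fundamental class under a degree-$\pm1$ map and under a finite covering, i.e.\ the identities $f_*[M]=\pm[N]$ and $\pi_*[\widetilde N]=d\,[N]$; everything else is the projection formula, Poincar\'e duality, and the lifting criterion for covering spaces. I would make sure to state clearly at the outset that all manifolds here are closed, connected, and oriented (so that fundamental classes exist), and I would either cite a standard reference (e.g.\ Hatcher or Bredon) for $f_*[M]=\pm[N]$ or give the one-line argument that $\deg f$ is by definition the integer with $f_*[M]=(\deg f)[N]$. With those facts in hand the three statements follow as above.
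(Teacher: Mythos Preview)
Your proposal is correct and follows essentially the same route as the paper: the projection formula together with Poincar\'e duality for the (co)homology statements, and the covering-space lifting argument for $\pi_1$. Your treatment of the $\pi_1$ part is in fact slightly more careful than the paper's, since you explicitly separate the finite- and infinite-sheeted cases (the paper simply asserts $\deg p\neq\pm1$), but the underlying idea is identical.
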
   

\begin{proof}
 Given a map $g: X \to Y$. We have $g_*(g^*(y)\frown x)=y\frown g_*(x)$ for all $x\in H_*(X),\,y\in H^*(Y)$. Let $[M]$ and $[N]$ be the fundamental classes of $M$ and $N$ respectively and $f: M \to N$ is a map. Now, if $\deg f=1$ then $f_*[M]=[N]$. 
Then for $0\neq y\in H^*(N)$ we have 
\[
f_*(f^*(y)\frown [M])=y\frown f_*[M]=y\frown [N]\neq 0
\]
 because $[N]$ is the fundamental class. Hence $f^*$ is a monomorphism. Furthermore, given $a\in H_*(N)$ we have $a=u\frown [N]$ for some $u\in H^*(N)$ by Poincare duality. So, 
\[
a=u\frown [N]=u\frown f_*[M]=f_*(f^*(u)\frown [M])
\]
is in the image of $f_*$, and hence $f_*$ is epic.

Concerning $\pi_1$, choose $x\in M$ and suppose $f_*: \pi_1(M, x)\to \pi_1(N, f(x))$ is not epic. Let $p: \wt N \to N$ be the covering map corresponding to the subgroup $f_*(\pi_1(M))$ of $\pi_1(N)$. Then there exists a lifting $\wt f: M\to \wt N$ of $f$ with $p\wt f=f$. Since $\deg p \neq \pm 1$, we conclude that $\deg f \neq \pm 1$. 
\end{proof}

\m So, informally speaking, if $f: M \to N$ is a map of degree 1 then $M$ is more ``massive'' than $N$. 

Thus, it is natural to pose the following question.

\begin{op}[Rudyak]\label{q:main}
Given two manifolds $M,N$, suppose that there exists a map $f: M \to N$ of degree 1. Is it true that $\crit M \geq \crit N$?    
\end{op}

\begin{rem}
The affirmative answer to the question implies the homotopy invariance of $\crit$. Indeed, if there are homotopy equivalences $f: M\to N$ and $g:N \to M$ then $\deg f=1=\deg g$ (for suitable orientations of $M$ and $N$), and thus $\crit M \geq \crit N \geq \crit M$.
\end{rem}

\section{Lusternik--Schnirelmann theory.}\label{LuS}
Next, we define an invariant that can be regarded as a homotopy invariant approximation of $\Crit$.
\begin{defin}
    1. A {\em categorical covering} of a space $X$ is an open covering $\{U_\ga\}$ such that each $U_{\ga}$ is open and contractible in $X$.

2. The {\em Lusternik--Schnirelmann category} (in future LS category) of a space $X$ (denoted by $\cat X$) is the least number $k$ such that there exists a categorical covering $\{U_0, \ldots, U_k\}$ of $X$.\\
If $X$ does not admit any categorical covering, we say that $\cat X$ is not defined.
\end{defin}

\begin{prop}
Lusternik--Schnirelmann category is a homotopy invariant. In another words, if $X$ and $Y$ are homotopy equivalent then $\cat X=\cat Y$
\end{prop}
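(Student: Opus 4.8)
The plan is to reduce the statement to a one-directional domination lemma and then apply it twice. Concretely, I would first prove: if there exist maps $f\colon X\to Y$ and $g\colon Y\to X$ with $g\circ f\cong\id_X$, then $\cat X\leq\cat Y$, and moreover $\cat X$ is defined whenever $\cat Y$ is. Granting this, a homotopy equivalence $f\colon X\to Y$ with homotopy inverse $g$ satisfies both $g\circ f\cong\id_X$ and $f\circ g\cong\id_Y$, so the lemma applies in both directions and gives $\cat X\leq\cat Y$ and $\cat Y\leq\cat X$, hence $\cat X=\cat Y$ (and each side is defined precisely when the other is).

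To prove the domination lemma, I would assume $\cat Y=k$ and fix a categorical covering $\{V_0,\dots,V_k\}$ of $Y$, then set $U_i=f^{-1}(V_i)$. Each $U_i$ is open because $f$ is continuous, and the $U_i$ cover $X$ because the $V_i$ cover $Y$; the only nontrivial point is that each $U_i$ is contractible in $X$. For this, note that the inclusion $V_i\hookrightarrow Y$ is null-homotopic (as $V_i$ is contractible in $Y$), so precomposing with the corestriction $U_i\to V_i$ of $f$ shows that $f|_{U_i}\colon U_i\to Y$ is null-homotopic, and hence so is $g\circ f|_{U_i}\colon U_i\to X$.

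Finally, letting $H\colon X\times[0,1]\to X$ be a homotopy from $g\circ f$ to $\id_X$ and restricting it to $U_i\times[0,1]$, I would obtain a homotopy from $(g\circ f)|_{U_i}$ to the inclusion $j_i\colon U_i\hookrightarrow X$; combined with the previous step, $j_i$ is null-homotopic, so $U_i$ is contractible in $X$. Thus $\{U_0,\dots,U_k\}$ is a categorical covering of $X$, which shows at once that $\cat X$ is defined and that $\cat X\leq k=\cat Y$. I do not expect a genuine obstacle here; the single point that needs attention is that contractibility of each $U_i$ must be checked inside $X$ and not merely inside $Y$, and it is exactly at that point that the hypothesis $g\circ f\cong\id_X$ is used.
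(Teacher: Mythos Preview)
Your argument is correct and is in fact the standard proof: first establish the domination lemma (if $X$ is dominated by $Y$ then $\cat X\leq\cat Y$), then apply it in both directions to a homotopy equivalence. Each step you outline is sound, and you correctly identify that the hypothesis $g\circ f\cong\id_X$ is needed precisely to pass from contractibility of $U_i$ in $Y$ (via $f$) to contractibility in $X$.

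The paper itself does not give a proof; it simply cites \cite[Theorem 1.25]{CLOT}. Your write-up is essentially the argument one finds there (and in most treatments of LS category), so there is no genuine methodological difference---you have just supplied the details that the paper outsources to a reference.
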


\m For the proof see e.g. \cite[Theorem 1.25]{CLOT}

\m The next theorem is the main application of LS theory. 

\begin{thm}[Lusternik and Schnirelmann]\label{LS}
Given a smooth function $f: M \to \R$ on a closed smooth manifold $M$, the number of critical points of $f$ is at least $1+\cat M$.
\end{thm}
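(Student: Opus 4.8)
If $f$ has infinitely many critical points there is nothing to prove, since $\cat M\le\dim M<\infty$ for a closed manifold. So assume $\crit f=k+1<\infty$; then $\critset f=\{p_0,\dots,p_k\}$ consists of isolated points, and the assertion amounts to $\cat M\le k$, that is, to producing a categorical covering of $M$ with $k+1$ members. The plan is to build such a covering out of the negative gradient flow of $f$.

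First I would fix a Riemannian metric on $M$ and let $\varphi\colon\R\times M\to M$ be the flow of $-\nabla f$, which is complete because $M$ is compact. Along any non-constant trajectory $f$ is strictly decreasing, so compactness forces the $\omega$-limit set of every trajectory to be a non-empty connected subset of $\critset f$; since the latter is finite, this $\omega$-limit set is a single critical point. Hence for each $x\in M$ there is an index $i(x)$ with $\varphi_t(x)\to p_{i(x)}$ as $t\to+\infty$.

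Next, for each $i$ I would pick an open neighbourhood $V_i\ni p_i$ that is contractible in $M$ --- a small metric ball will do --- and set $U_i=\{x\in M:\varphi_t(x)\in V_i\text{ for some }t\ge 0\}$. Continuity of $\varphi$ together with openness of $V_i$ shows that $U_i$ is open, and the previous step shows $U_0\cup\dots\cup U_k=M$. Thus $\{U_0,\dots,U_k\}$ is an open covering of $M$ with $k+1$ members, and it remains to see that each $U_i$ is contractible in $M$. The intended nullhomotopy of the inclusion $U_i\hookrightarrow M$ proceeds in two stages: push each point along the flow until it lands in $V_i$, and then contract $V_i$ onto $p_i$ inside $M$.

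I expect the first stage --- turning ``flow until you reach $V_i$'' into an honest homotopy $U_i\times[0,1]\to M$ --- to be the only real obstacle, because the entrance time $x\mapsto\inf\{t\ge 0:\varphi_t(x)\in V_i\}$ need not be continuous (a trajectory may meet $\partial V_i$ tangentially). There are two standard ways around this. One is to first shrink $U_i$ by a flow-realized deformation retraction that carries it into a smaller region on which a continuous entrance time does exist. The other is to bypass pointwise times and argue on sublevel sets: with $M_a=f^{-1}((-\infty,a])$, choose real numbers $a_0<c^{(1)}<a_1<\dots<c^{(m)}<a_m$, none a critical value, separating the distinct critical values $c^{(1)}<\dots<c^{(m)}$, and let $g_j$ be the least number of open subsets of $M$, each contractible in $M$, that cover $M_{a_j}$; then $g_0=0$, $g_m=\cat M+1$, and the Morse-theoretic passage through the level $c^{(j)}$ (realized by the gradient flow, with due care for the possibly degenerate but isolated critical points there) gives $g_j\le g_{j-1}+k_j$, where $k_j$ is the number of critical points on the level $c^{(j)}$. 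Summing over $j$ and using $\sum_j k_j=k+1$ yields $\cat M+1\le k+1$. Either route gives $\cat M\le k$, hence $\crit f=k+1\ge\cat M+1$. This is the classical Lusternik--Schnirelmann theorem, and complete details may be found in standard references, e.g.\ \cite{Mil, CLOT}.
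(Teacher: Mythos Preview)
The paper does not actually prove \theoref{LS}; it simply attributes the result to Lusternik and Schnirelmann and points to \cite{LS} for details, so there is no argument in the paper to compare against. Your sketch is therefore strictly more than the paper provides.

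As a proof outline your proposal is sound and follows the classical route. The first approach (basins $U_i$ of the negative gradient flow) captures the geometric intuition, and you correctly flag the only genuine difficulty, namely that the entrance time into $V_i$ need not be continuous. Your second approach via sublevel sets is the standard way to make the argument rigorous: the key input is the deformation lemma---for any open neighborhood $U$ of the critical set on the level $c^{(j)}$ there exists $\varepsilon>0$ such that $M_{c^{(j)}+\varepsilon}\setminus U$ deforms in $M$ into $M_{c^{(j)}-\varepsilon}$---and this lemma requires only that the critical points be isolated, not nondegenerate. From it the inequality $g_j\le g_{j-1}+k_j$ follows exactly as you indicate, and summing gives $\cat M\le k$. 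Two small remarks: calling this step ``Morse-theoretic'' is a slight misnomer, since no handle structure or nondegeneracy is used, only the flow deformation; and of your two references, \cite{CLOT} (e.g.\ Chapter~1) is the appropriate one, whereas \cite{Mil} treats only Morse functions and does not contain this theorem.
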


\m The theorem appeared in late 1920th, see \cite{LS} for greater detail. Historically, Lusternik and Schnirelmann proved the old Poincare conjecture: every Riemannian manifold with the topology of a 2-sphere possessed at least three simple closed geodesics (i.e. three embedded geodesic circles). 
To do this, the authors considered the space of closed geodesics on a Riemannian manifold homeomorphic to $S^2$ and interpreted geodesics as critical points of the length functional and estimated the number of these critical points. From this point of view \theoref{LS}  can be regarded as a preliminary (baby) version of the three closed geodesics theorem. 

\begin{rem}
There are many examples with $\crit M=1+cat M$ and a few of them with $\crit M=\cat M+2$, and currently, we do not know examples with $\crit M >\cat M+2$.

 See~\cite[Open Problem 7.34 and the following text]{CLOT}
\end{rem}

\begin{prop}
If $M$ is a closed $k$-connected manifold then $\cat M\leq \dim M/k$. \label{p:dimcon}
\end{prop}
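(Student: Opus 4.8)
The plan is to exploit the homotopy invariance of $\cat$ to replace $M$ by a convenient CW model, and then run the classical ``fat wedge'' argument for the dimension/connectivity estimate of LS category. Write $n=\dim M$; we may assume $k\geq 1$, so that $M$ is simply connected and hence orientable (for $k=0$ the asserted bound degenerates, but the argument below still recovers the classical $\cat M\leq\dim M$). First I would produce a CW complex $K\simeq M$ having a single $0$-cell and all remaining cells in dimensions between $k+1$ and $n$. For closed $M$ this is available: either from handle theory --- a closed $k$-connected $n$-manifold with $n\geq 6$ admits a handle decomposition with one $0$-handle and no handles of index $1,\dots,k$, whose ascending CW structure has cells only in dimensions $0,k+1,\dots,n$ --- or, uniformly in $n$, from the existence of a minimal CW model of the finite simply connected complex $M$: it has no $1$-cell since $H_1(M)=0$, no cells in dimensions $2,\dots,k$ since $M$ is $k$-connected, and dimension exactly $n$ since $H_i(M)=0$ for $i>n$ while $H_n(M;\Z)$ is torsion-free; see e.g.\ \cite{CLOT}. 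By homotopy invariance of $\cat$ it now suffices to bound $\cat K$.

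For the bound I would use Whitehead's characterization: $\cat K\leq m$ if and only if the iterated diagonal $\Delta\colon K\to K^{m+1}$ is homotopic to a map into the fat wedge $T^{m+1}K=\{(x_0,\dots,x_m):x_j=\ast\text{ for some }j\}$. The key observation is that the pair $(K^{m+1},T^{m+1}K)$ is highly connected: in the product CW structure on $K^{m+1}$, every cell not lying in the subcomplex $T^{m+1}K$ is a product $e_0\times\cdots\times e_m$ of positive-dimensional cells of $K$, hence has dimension at least $(m+1)(k+1)$; so $K^{m+1}$ is obtained from $T^{m+1}K$ by attaching cells of dimension $\geq(m+1)(k+1)$, and the pair is $\bigl((m+1)(k+1)-1\bigr)$-connected. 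A standard obstruction-theoretic compression argument then shows that any map from an $n$-dimensional CW complex into $K^{m+1}$ deforms into $T^{m+1}K$ provided $n\leq(m+1)(k+1)-1$. In particular $\Delta$ compresses, and hence $\cat K\leq m$, as soon as $(m+1)(k+1)\geq n+1$.

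The least such $m$ is $m=\lceil(n-k)/(k+1)\rceil$, and writing $n=q(k+1)+r$ with $0\leq r\leq k$ one checks at once that this equals $\lfloor n/(k+1)\rfloor$. Therefore $\cat M=\cat K\leq\lfloor n/(k+1)\rfloor\leq n/(k+1)\leq n/k=\dim M/k$, which is the assertion (and in fact yields the sharper bound $\lfloor\dim M/(k+1)\rfloor$). I expect the only genuine obstacle to be the CW-model reduction of the first paragraph --- eliminating the cells of dimensions $1,\dots,k$ without pushing the total dimension above $n$; for $n\geq 6$ this is routine handle-trading, and in general it is supplied by the theory of minimal CW complexes for simply connected spaces. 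Everything after that reduction is the familiar textbook derivation of the estimate $\cat X\leq\dim X/(k+1)$ for $k$-connected $X$.
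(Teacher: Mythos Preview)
Your argument is correct and is precisely the standard James proof of the dimension--connectivity estimate (indeed you obtain the sharper bound $\cat M\le\lfloor\dim M/(k+1)\rfloor$, of which the stated inequality is an immediate consequence). The paper does not actually prove this proposition: its entire proof is the citation ``See \cite[Theorem~1.50]{CLOT}'', so there is no argument of the paper's own to compare yours against --- you have simply unpacked what that reference contains.

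One remark on the step you single out as the potential obstacle. The cell-trading is not really needed: the connectivity of the pair $(M^{m+1},T^{m+1}M)$ depends only on the connectivity of $M$, since $M^{m+1}/T^{m+1}M\simeq M^{\wedge(m+1)}$ and an $(m{+}1)$-fold smash of a $k$-connected well-pointed space is $\bigl((m{+}1)(k{+}1)-1\bigr)$-connected. Thus one only needs that $M$ has the homotopy type of an $n$-dimensional CW complex --- automatic for a closed smooth $n$-manifold --- and the compression of $\Delta$ goes through without first producing a minimal CW model or invoking handle theory.
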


\begin{proof}
See \cite[Theorem1.50]{CLOT}
\end{proof}

\m A good method for calculating the LS category is the following.

Recall the definition of cup length. Let $R$ be a commutative ring and $X$ be a space.
\begin{defin}
    The cup length of $X$ with coefficients in $R$ is the least integer $k$ (or $\infty$) such that all $(k+1)$-fold cup products vanish in the reduced cohomology $\Tilde{H}^{*}(X;R)$. We denote this by $\cl_R(X)$. 
\end{defin}

\begin{thm}[Froloff-Elsholz]
   We have the following inequality\\
   $\cl_R(X) \leq \cat(X)$.
\end{thm}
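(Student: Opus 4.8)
The plan is to run the standard argument that compares a categorical covering with relative cohomology and the relative cup product. Suppose first that $\cat(X) = k < \infty$, so that there is a categorical covering $U_0, \ldots, U_k$ of $X$ by open sets, each of which is contractible in $X$. Our goal is to show that every $(k+1)$-fold cup product of classes in $\tilde H^*(X;R)$ vanishes, which is exactly the assertion $\cl_R(X) \le k = \cat(X)$. (If $\cat(X) = \infty$ the inequality is vacuous, and if $\cat(X) = 0$ then $X$ is contractible, $\tilde H^*(X;R) = 0$, and there is nothing to prove.)

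The first step is to promote each cohomology class to a relative class. Since the inclusion $U_i \hookrightarrow X$ is nullhomotopic, the restriction homomorphism $\tilde H^*(X;R) \to \tilde H^*(U_i;R)$ is the zero map. Hence, from the long exact cohomology sequence of the pair $(X, U_i)$, every $u_i \in \tilde H^*(X;R)$ admits a lift $\bar u_i \in H^*(X, U_i; R)$, i.e.\ a relative class mapping to $u_i$ under the natural map $H^*(X, U_i;R) \to H^*(X;R)$.

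The second step is to feed these relative lifts into the relative cup product. For open subsets $A, B \subseteq X$ there is a natural pairing $H^p(X, A;R) \otimes H^q(X, B;R) \to H^{p+q}(X, A\cup B; R)$ which is compatible, via the restriction maps to absolute cohomology, with the ordinary cup product. Iterating this pairing on $\bar u_0, \ldots, \bar u_k$ produces a class $\bar u_0 \cup \cdots \cup \bar u_k \in H^*(X, U_0 \cup \cdots \cup U_k; R)$. But $U_0 \cup \cdots \cup U_k = X$, so this group is $H^*(X, X;R) = 0$; therefore $\bar u_0 \cup \cdots \cup \bar u_k = 0$, and mapping down to absolute cohomology yields $u_0 \cup \cdots \cup u_k = 0$. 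As the $u_i$ were arbitrary reduced classes, all $(k+1)$-fold products vanish, and the theorem follows.

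The only point that needs genuine care is the existence of the relative cup product $H^p(X,A;R)\otimes H^q(X,B;R)\to H^{p+q}(X,A\cup B;R)$ for the excisive (here, open) couple $\{A,B\}$, together with its compatibility with the absolute product — this is where I expect what little real work there is to be located; the rest is formal diagram-chasing in the long exact sequences of the pairs $(X,U_i)$. It is worth remarking that openness of the $U_i$ is used precisely here, so the argument genuinely relies on the covering being a \emph{categorical} (open) covering rather than merely a covering by sets contractible in $X$.
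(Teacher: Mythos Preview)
Your argument is correct and is exactly the standard proof: lift each reduced class to a relative class using nullhomotopy of the inclusions $U_i\hookrightarrow X$, then use the relative cup product for the excisive (open) cover to land in $H^*(X,X;R)=0$. The paper does not spell out a proof but simply refers to \cite[Proposition~1.5]{CLOT}, where precisely this argument is given; so your proposal matches the intended approach.
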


For the proof see \cite[Proposition 1.5]{CLOT}.

\begin{ex}\label{e:torus}
1. The cohomology ring of the torus $T^n$ over $\Z$ is an exterior algebra on $n$ generators. So, $\cl_{\Z}(T^n)=n$. Hence, $\cat T^n\geq n$. We also have the inequality $\cat T^n \leq \dim T^n=n$, so we conclude that $\cat T^n=n$.

\m 2. For real projective spaces, we have 
\[
H^*(\rp^n;\Z/{2})=\Z/{2}[u]/(u^{n+1}),\dim u=1.
\]
Hence, $\cl_{\Z/2}(\rp^n)=n$. So, $\cat (\rp^n)\geq n$, and we have $\cat (\rp^n) = n$ because $\cat (\rp^n)\leq \dim (\rp^n) =n$.

\m 3. For  complex projective spaces, we have 
\[
H^*(\cp^n;\Z)=\Z[v]/(v^{n+1}),\dim v=2.
\]
Hence, $\cl_{\Z}(\cp^n)=n$. So, $\cat (\cp^n)\geq n$, and we have $\cat (\cp^n) = n$ because $\cat (\cp^n)\leq \dim (\cp^n)/2 =n$.
\end{ex}

\begin{prop}\label{p:crit=1+dim}
Let $M^n$ be a closed manifold such that $\cat M=\dim M=n$. Then $\crit M=n+1$.
\end{prop}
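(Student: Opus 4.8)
The plan is to obtain the inequality $\crit M \geq n+1$ from the Lusternik--Schnirelmann inequality (Theorem~\ref{LS}) and then match it with the upper bound from Takens' theorem (Theorem~\ref{tak}). Concretely, suppose $f\colon M\to\R$ is any smooth function. By Theorem~\ref{LS} the number of critical points of $f$ is at least $1+\cat M$. Since by hypothesis $\cat M=n$, this gives $\crit f\geq n+1$; taking the minimum over all such $f$ yields $\crit M\geq n+1$.

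For the reverse inequality, I would simply invoke Theorem~\ref{tak} (Takens), which asserts $\Crit M\leq \dim M+1=n+1$ for any closed connected manifold. (Note the paper uses $\crit$ and $\Crit$ interchangeably for $\Crit M$.) Combining the two bounds gives $n+1\leq \crit M\leq n+1$, hence $\crit M=n+1$, as claimed.

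Since both ingredients are already established in the excerpt, there is essentially no obstacle; the only things to be careful about are notational (making sure $\cat M$ is used with the convention that a categorical covering by $k+1$ sets gives $\cat M\leq k$, which is exactly what Theorem~\ref{LS} is stated against) and making sure the hypothesis $\cat M=\dim M$ is genuinely used — it is, precisely to turn the LS lower bound into $n+1$. No connectivity or orientability hypotheses beyond ``closed manifold'' are needed, since Takens' theorem as stated here requires only closed and connected, which is the paper's standing assumption.

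\begin{proof}
Let $f\colon M\to\R$ be an arbitrary smooth function. By \theoref{LS}, the number of critical points of $f$ is at least $1+\cat M=1+n$, since $\cat M=n$ by hypothesis. Taking the minimum over all smooth functions $f\colon M\to\R$ gives $\crit M\geq n+1$. On the other hand, \theoref{tak} gives $\crit M\leq \dim M+1=n+1$. Combining the two inequalities yields $\crit M=n+1$.
\end{proof}
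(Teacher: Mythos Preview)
Your proof is correct and follows exactly the same approach as the paper: use the Lusternik--Schnirelmann inequality (\theoref{LS}) for the lower bound $\crit M\geq \cat M+1=n+1$ and Takens' theorem (\theoref{tak}) for the upper bound $\crit M\leq \dim M+1=n+1$. There is nothing to add.
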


\p We have $\cat M+1\leq \crit M$ by \theoref{LS} and $\crit M\leq\dim M+1= n+1$ by \theoref{tak}. Thus
\[
n+1=\cat M+1\leq \crit M\leq \dim +1= n+1.\qed
\]

\begin{cor}[Some cases of homotopy invariance of $\crit $]\label{c:cat=n}
Let $M^n$ be a closed manifold such that $\cat M=\dim M=n$. If $X$ is a closed manifold with $X\cong M$ then $\crit X=n+1=\crit M$.
\end{cor}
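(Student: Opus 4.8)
The plan is to reduce the statement to \propref{p:crit=1+dim} applied to $X$, for which one needs only to verify that $\cat X=\dim X=n$.

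First I would observe that, since $X\cong M$, the homotopy invariance of LS category gives $\cat X=\cat M=n$. It remains to check that $\dim X=n$ as well. This is the one point that needs care, and it rests on the fact that the dimension of a closed connected manifold is a homotopy invariant: for any closed connected manifold $Y$ one has
\[
\dim Y=\max\{k : H^k(Y;\Z/2)\neq 0\},
\]
which follows from Poincar\'e duality with $\Z/2$-coefficients (valid with no orientability hypothesis) together with the vanishing of $H^k(Y;\Z/2)$ for $k>\dim Y$ (a closed smooth manifold has a CW structure of dimension equal to its manifold dimension, e.g. via Morse theory). Applying this to $Y=X$ and $Y=M$, and using $H^*(X;\Z/2)\cong H^*(M;\Z/2)$, we conclude $\dim X=\dim M=n$.

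Having established $\cat X=\dim X=n$, \propref{p:crit=1+dim} yields $\crit X=n+1$; the same proposition applied to $M$ (whose hypotheses $\cat M=\dim M=n$ are given) yields $\crit M=n+1$. Hence $\crit X=n+1=\crit M$, as claimed.

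The argument is essentially immediate once the preparatory results are in hand; the only genuine obstacle is the second step — ensuring that the (formal) dimension of a closed manifold is a homotopy invariant without assuming orientability, which is why one works with $\Z/2$-coefficients rather than $\Z$. Everything else is a direct appeal to results already recorded: homotopy invariance of $\cat$ and \propref{p:crit=1+dim}.
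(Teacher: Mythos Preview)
Your proof is correct and follows the intended approach: the paper states this corollary without an explicit proof, treating it as an immediate consequence of \propref{p:crit=1+dim} together with the homotopy invariance of $\cat$. Your argument is the natural unpacking of this, and you are right to single out the verification that $\dim X=n$ as the one point requiring a word of justification; your use of $\Z/2$-coefficient Poincar\'e duality to handle this (avoiding any orientability assumption) is the clean way to do it.
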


\begin{ex}
1. If $M=\R\P^n$ or $M=T^n$ and $X\cong M$ then $\crit M =\crit X$. This follows from \corref{c:cat=n} in view of examples \ref{e:torus}(1,2).

\m 2. We prove that $\crit X=n+1$ for $X\cong \cp^n, n\neq 2$. (In particular, $\crit \cp^n=n+1$ but we do not use it here, it appears a posteriori.) The case $n=1$ is trivial. For all $n$ we have
\[
\crit X \geq 1+\cat X=1+\cat \cp^n=n+1.
\]
 Finally, $H_i(X)=0=H^i(X)$ for $i$ odd. So, $r(\cp^n)=\{0,2,4,\ldots, 2n\}$ in \theoref{t:refin}. So, for $n\neq 2$ we have $\crit X\leq n+1$ by \theoref{t:refin}.

The case $n=2$ is an open question (although we know that $\crit \cp^2=3$).
\end{ex}

\m Keeping in mind \theoref{LS} on the relation between $\Crit$ and $\cat$, we pose the following question.

\begin{op}[Rudyak]
Given two manifolds $M,N$, assume that there exists a map $f: M \to N$ of degree 1. Is it true that $\cat M \geq \cat N$?
\end{op}

The question has affirmative answer for low-dimensional manifolds by Rudyak, \cite{Rud}.
\begin{thm}
For all maps $f: M^n \to N^n, n\leq 4$ of degree $\pm 1$, we have $\cat M \geq \cat N$.
\end{thm}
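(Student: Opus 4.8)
The plan is to establish the inequality $\cat M \geq \cat N$ separately in each dimension $n \leq 4$, using the fact that a degree $\pm 1$ map $f \colon M \to N$ is surjective on $\pi_1$ and on homology (Proposition~\ref{p:deg}), combined with the classification of the relevant low-dimensional manifolds and the known values of their LS category. Since $\cat$ of a connected manifold $M^n$ satisfies $1 \leq \cat M \leq n$ (by Proposition~\ref{p:dimcon} with $k=1$), the statement is vacuous for $n=0$, and for $n=1$ both $M$ and $N$ must be $S^1$, so $\cat M = \cat N = 1$.

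For $n = 2$: the possible values of $\cat N$ are $1$ (only $N = S^2$) and $2$ (every other closed surface, since such $N$ has $\pi_1 \neq 0$ and in fact $H^1(N;\Z/2) \neq 0$ carrying a nontrivial cup product, or one invokes $\cat = 2$ for all non-$S^2$ surfaces). If $\cat N = 2$ we must show $\cat M = 2$, i.e. $M \neq S^2$; but $f_* \colon \pi_1(M) \to \pi_1(N)$ is epic and $\pi_1(N) \neq 0$, so $\pi_1(M) \neq 0$ and hence $M \neq S^2$, giving $\cat M = 2$. For $n = 3$: here $\cat$ takes values in $\{1,2,3\}$, with $\cat N = 1$ iff $N \cong S^3$; $\cat N = 3$ iff $N$ is essential in a suitable sense (e.g. $\cat N = 3$ exactly when the fundamental class is detected by cup products of degree-$1$ classes, or $N$ is aspherical with $\cat = 3$). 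The strategy is: if $\cat N = 3$, use that a degree $1$ map pulls back a nontrivial triple cup product $u_1 u_2 u_3 \neq 0$ in $H^*(N)$ to a nontrivial triple product $f^*u_1 f^*u_2 f^*u_3 \neq 0$ in $H^*(M)$ (by the injectivity of $f^*$ in Proposition~\ref{p:deg}, together with naturality of cup products), so $\cl(M) \geq 3$ and hence $\cat M = 3$; if $\cat N = 2$, use the epimorphism on $\pi_1$ to rule out $M \cong S^3$, giving $\cat M \geq 2$. One has to handle the case where $\cat N = 3$ is \emph{not} witnessed by cup length (such $3$-manifolds exist, e.g. certain rational homology spheres with nontrivial $\pi_1$) by a separate argument using the surjection on $\pi_1$ and on $H_*$ with local/twisted coefficients, or by appeal to the classification of closed $3$-manifolds with $\cat \leq 2$.

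For $n = 4$: this is the substantial case. The LS category of a closed $4$-manifold lies in $\{1,2,3,4\}$, with $\cat N = 1$ iff $N \cong S^4$, and $\cat N = 4$ iff $N$ is aspherical (or more precisely by the characterizations in \cite{Rud}). The plan is again to split by the value of $\cat N$. When $\cat N = 4$, one wants to transfer the relevant obstruction — which for aspherical $N$ is essentially the nontriviality of the classifying map $N \to K(\pi_1 N, 1)$ on the top class — along $f$; since $\pi_1(M) \to \pi_1(N)$ is onto and $f$ has degree $1$, the composite $M \to N \to K(\pi_1 N,1)$ is nonzero on $H_4$, which forces $\cat M \geq 4$ via the standard lower bound for $\cat$ coming from the classifying map (category weight / the Berstein--Švarc argument). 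When $\cat N \in \{2,3\}$, combine the cup-length lower bound pulled back along $f^*$ with $\pi_1$-surjectivity to get $\cat M \geq \cat N$; the delicate point is the manifolds whose category is not realized by cup length, where one must use finer invariants (e.g. Berstein's $\pi_1$-cup-length with coefficients in $\Z[\pi_1]$, or the fact that $\cat$ is bounded below by the "category of the $1$-skeleton"/essentiality).

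The main obstacle I expect is precisely the case $\cat N = 3$ in dimension $3$ and the cases $\cat N = 3, 4$ in dimension $4$ where the category is \emph{not} detected by ordinary cup length: there the lower bound for $\cat N$ comes from more refined tools (Berstein--Schwarz classes, category weight, or the structure of $\pi_1$), and one must check that each such tool is natural enough to be pulled back along an arbitrary degree $1$ map using only surjectivity on $\pi_1$ and on homology. Concretely, the key lemma to prove is that if $\cat N = n$ for a closed $n$-manifold $N$ (equivalently $N$ is "essential" in the sense that its Berstein--Schwarz class $\mathfrak{b}_N \in H^1(N; I)$ has $\mathfrak{b}_N^{\,n} \neq 0$), then for any degree $\pm 1$ map $f \colon M^n \to N$ one also has $\cat M = n$; this should follow because $f_* \colon \pi_1 M \to \pi_1 N$ being surjective lets us identify $f^* \mathfrak{b}_N$ with (the image of) $\mathfrak{b}_M$ under the induced coefficient map, and degree $\pm 1$ plus Poincaré duality keeps the top power nonzero. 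Once that lemma is in hand, the remaining low values of $\cat N$ are disposed of by the easy $\pi_1$-surjectivity observation, completing all cases $n \leq 4$.
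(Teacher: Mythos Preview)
The paper does not actually prove this theorem: it is stated and attributed to \cite{Rud}, with no argument given in the text. So there is no in-paper proof to compare your proposal against; you are in effect trying to reconstruct the argument of \cite{Rud}.

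Your outline is broadly on target and captures the two mechanisms that do the real work in \cite{Rud}. The extreme values are fine: $\cat N=1$ is handled by $\pi_1$-surjectivity (if $N$ is not a homotopy sphere then neither is $M$), and $\cat N=n$ is handled by your ``key lemma'' on essentiality, which is correct as stated --- the composite $M\to N\to B\pi_1(N)$ is nonzero on $H_n$, it factors through $c_M\colon M\to B\pi_1(M)$, hence $M$ is essential and $\cat M=n$. For $n\le 3$ this already suffices once one knows (G\'omez-Larra\~naga--Gonz\'alez-Acu\~na) that a closed orientable $3$-manifold has $\cat=3$ exactly when $\pi_1$ is not free, which in turn is equivalent to essentiality; your allusion to ``the classification of closed $3$-manifolds with $\cat\le 2$'' is the right pointer.

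The genuine soft spot is the case $n=4$, $\cat N=3$. Here you say one should ``combine the cup-length lower bound pulled back along $f^*$ with $\pi_1$-surjectivity'' and, where that fails, invoke ``finer invariants'' such as cup products with $\Z[\pi_1]$-coefficients or category weight. This is where your plan stops being a plan and becomes a hope. Ordinary cup length does not detect $\cat=3$ for all closed $4$-manifolds, and there is no off-the-shelf theorem saying that twisted cup length or category weight always realises $\cat$ in dimension $4$. In \cite{Rud} this step is not handled by a generic naturality argument; it uses specific structural information about closed $4$-manifolds with small category (in particular, what $\cat M\le 2$ forces on $\pi_1(M)$ and on the image of $[M]$ in $H_*(B\pi_1 M)$), and then shows that a degree-$\pm 1$ image $N$ of such an $M$ must share those properties, contradicting $\cat N=3$. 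Your proposal does not isolate that input, and the tools you list do not by themselves close the gap. If you want to turn your outline into a proof, the concrete lemma to supply is: \emph{if $M^4$ is closed orientable with $\cat M\le 2$ and $f\colon M\to N$ has degree $\pm 1$, then $\cat N\le 2$} --- and this requires a characterisation of $\cat\le 2$ in dimension $4$, not just naturality of cohomological lower bounds.
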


\begin{prop}
    If $M$ is a closed connected manifold with $\cat (M) =1$ then $M$ is a homotopy sphere.
\end{prop}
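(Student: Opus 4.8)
The plan is to extract two facts from the hypothesis $\cat M=1$: that $M$ has the integral homology of a sphere, and that $M$ is simply connected; then Hurewicz and Whitehead will finish the job. Write $n=\dim M$. The case $n=1$ is immediate since the only closed connected $1$-manifold is $S^1$, so assume $n\geq 2$.

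\emph{Homology.} By the Froloff--Elsholz inequality, $\cl_F(M)\leq\cat M=1$ for every field $F$, so every product of two positive-degree classes in $H^*(M;F)$ vanishes. Taking $F=\Z/2$ and using that Poincar\'e duality over $\Z/2$ makes the cup product $H^k(M;\Z/2)\otimes H^{n-k}(M;\Z/2)\to H^n(M;\Z/2)\cong\Z/2$ a perfect pairing, we get $H^k(M;\Z/2)=0$ for $0<k<n$; in particular $H^1(M;\Z/2)=0$, so $w_1(M)=0$ and $M$ is orientable. Hence $H^n(M;F)\cong F$ for every field $F$, and the same duality-plus-cup-length argument over each $F$ gives $H^k(M;F)=0$ for $0<k<n$ and all $F$. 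A routine universal-coefficient computation (over $\Q$ to kill free parts, over the various $\Z/p$ to kill torsion) then yields $H_k(M;\Z)=0$ for $0<k<n$, while $H_0(M;\Z)=H_n(M;\Z)=\Z$; thus $M$ is an integral homology $n$-sphere and, in particular, $H_1(M;\Z)=0$.

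\emph{Fundamental group.} This is where the full strength of $\cat M=1$ is needed, since for $n\geq 3$ homology spheres can have arbitrarily complicated $\pi_1$. Write $M=U_0\cup U_1$ with each $U_i$ open and null-homotopic in $M$, and pass to the open cover of $M$ by the connected components of $U_0$ and of $U_1$. Among distinct members of this refined cover the only nonempty intersections are components of $U_0\cap U_1$, so the graph-of-groups (Bass--Serre) form of van Kampen's theorem presents $\pi_1(M)$ as the fundamental group of a graph of groups whose vertex groups are the fundamental groups of the components of $U_0$ and of $U_1$. Each such component is null-homotopic in $M$, so every vertex group maps trivially into $\pi_1(M)$; hence $\pi_1(M)$ is isomorphic to the fundamental group of the underlying connected graph, which is \emph{free}. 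Since $\pi_1(M)^{\mathrm{ab}}\cong H_1(M;\Z)=0$ by the previous paragraph, and a free group with trivial abelianization is trivial, we conclude $\pi_1(M)=1$. (Equivalently: $\cat M\leq 1$ says $M$ is a co-H-space, and the fundamental group of a co-H-space is free because cogroup objects in the category of groups are exactly the free groups.)

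\emph{Conclusion.} Now $M$ is a simply connected CW complex with $\widetilde H_*(M;\Z)\cong\widetilde H_*(S^n;\Z)$ and $n\geq 2$, so by the Hurewicz theorem $\pi_n(M)\cong H_n(M;\Z)=\Z$; choosing $\gf\colon S^n\to M$ representing a generator makes $\gf_*$ an isomorphism on $H_n$ and (trivially) in every other degree, hence an isomorphism on all homology, so $\gf$ is a homotopy equivalence by Whitehead's theorem and $M$ is a homotopy $n$-sphere. The only genuinely non-routine step is the triviality of $\pi_1(M)$: one needs either the graph-of-groups refinement of van Kampen's theorem applied to a categorical cover, or the identification of the property $\cat M\leq 1$ with $M$ being a co-H-space together with the classical fact that co-H-spaces have free fundamental group.
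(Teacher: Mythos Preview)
Your proof is correct and rests on the same two ingredients as the paper's: (i) $\cat M=1$ forces $\pi_1(M)$ to be free (via the co-$H$-space structure, which the paper cites from \cite{Ark} and which you also offer as an alternative to your van Kampen/graph-of-groups sketch), and (ii) Poincar\'e duality together with the cup-length bound kills the intermediate cohomology. The organization differs: the paper splits into cases --- simply connected versus not, reaching a contradiction in the non-simply-connected case from $H^1(M;\Z/2)\neq 0$ --- whereas you first show $M$ is an integral homology sphere by working over all fields (extracting orientability from $w_1=0$ along the way) and only then combine ``$\pi_1$ free'' with $H_1=0$ to conclude $\pi_1=1$. Your route is a bit more explicit about orientability and about passing from field-coefficient vanishing to integral vanishing via universal coefficients, and you spell out the Hurewicz--Whitehead step that the paper leaves implicit; the paper's version is shorter but tacitly uses that a simply connected closed manifold is orientable and glosses over torsion when invoking integral Poincar\'e duality.
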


\begin{proof}
    Recall that $\pi_1(X)$ is free for all CW spaces (not necessarily manifolds) $X$ with $\cat (X)=1$ (i.e. the so-called co-$H$-spaces), see~ \cite[Prop. 2.4.3]{Ark}. For a simply connected closed manifold $M^n$ we claim that $H^i(M)=0$ for $i\neq 0,n$ (and hence $M$ is a homotopy sphere). Indeed, if $a\in H^k(M), a\neq 0$ for some $k\neq 0,n$ then there is a Poincar\'e dual class $b\in H^{n-k}(M)$ with $a\smile b\neq 0$. Hence the $\Z$-cup-length of $M$ is at least 2, and so $\cat (M)\geq 2$, which is a contradiction. 
 If $M$ is not simply connected then $\pi_1(M)$ is a non-trivial free group, and so $H_1(M)$ is a non-zero free abelian group, and so $H^1(M;\Z/2)\neq 0$. Hence, asserting as above, we see that $\Z/2$-cup-length of $M$ is at least 2. Thus, $\cat (M) \geq 2$, which is a contradiction.
\end{proof}

\begin{cor}[A weak version of Reeb's theorem]\label{reeb} If $M$ is a closed connected manifold with $\crit M=2$ then $M$ is a homotopy sphere.
\end{cor}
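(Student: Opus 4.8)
The plan is to reduce Corollary~\ref{reeb} to the proposition immediately preceding it. First I would invoke Theorem~\ref{LS}: if $\crit M = 2$, then the number of critical points of any smooth $f\colon M\to\R$ is $\geq 1+\cat M$, and since some $f$ achieves $\crit M = 2$ critical points, we get $2 \geq 1 + \cat M$, hence $\cat M \leq 1$. Next, I would dispose of the degenerate case $\cat M = 0$: a space with $\cat M = 0$ is contractible, so $M$ would be a closed contractible manifold; but a closed manifold is not contractible (e.g. its top $\Z/2$-homology is nonzero, or it has a nonzero fundamental class mod $2$), so this case cannot occur — or one may simply note $M = S^0$ is excluded by connectedness, leaving $\cat M = 1$. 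Then I would apply the previous proposition verbatim: $\cat M = 1$ implies $M$ is a homotopy sphere, which is exactly the conclusion.

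In slightly more detail, the write-up would read: Suppose $\crit M = 2$. By \theoref{LS} we have $1 + \cat M \leq \crit M = 2$, so $\cat M \leq 1$. Since $M$ is a nonempty closed connected manifold, it is not contractible, so $\cat M \neq 0$; hence $\cat M = 1$, and by the preceding proposition $M$ is a homotopy sphere. That is the entire argument.

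I do not anticipate a genuine obstacle here — the corollary is a one-line consequence of the proposition and of \theoref{LS}. The only point requiring a word of care is ruling out $\cat M = 0$, i.e. observing that a closed manifold is never contractible; this is standard (a closed $n$-manifold has $H_n(M;\Z/2)\neq 0$, so is not contractible), and the preceding proposition already implicitly handles it since its hypothesis is $\cat M = 1$. One could alternatively phrase the whole corollary as: $\crit M = 2 \Rightarrow \cat M \leq 1 \Rightarrow \cat M = 1 \Rightarrow M$ is a homotopy sphere, citing the proposition for the last implication.

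A remark worth including: the name ``weak version of Reeb's theorem'' is justified because Reeb's classical theorem concludes that a closed manifold admitting a Morse function with exactly two critical points is homeomorphic to a sphere, whereas here we allow arbitrary (possibly degenerate) critical points and only conclude a homotopy-theoretic statement; the weakening in the conclusion is the price for the weakening (removal of non-degeneracy) in the hypothesis.
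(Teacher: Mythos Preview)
Your proposal is correct and follows exactly the paper's approach: the paper's proof is the single line ``Indeed, in this case $\cat M=1$,'' which implicitly uses \theoref{LS} and then invokes the preceding proposition. Your write-up simply makes explicit the intermediate steps (the inequality $\cat M\leq 1$ and the exclusion of $\cat M=0$) that the paper leaves tacit.
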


\begin{proof} Indeed, in this case $\cat M=1$. 
\end{proof}

\begin{cor}\label{homsphere}
   Let $f \colon M\to N$ be a map of degree $\pm1$. If $M$ is a homotopy sphere then $N$ is also a homotopy sphere. 
\end{cor}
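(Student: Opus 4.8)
The plan is to push the cohomological and fundamental-group information from $M$ across $f$ by means of \propref{p:deg}, and then to recognize the resulting space $N$ as a homotopy sphere via the Hurewicz and Whitehead theorems. Recall first that a map of degree $\pm 1$ exists only between closed connected orientable manifolds of equal dimension, so put $n=\dim M=\dim N$. If $n\le 1$ the only closed connected $1$-manifold is $S^1$, and there is nothing to prove; so assume $n\ge 2$.

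First I would determine $\pi_1(N)$. Since $M\cong S^n$ with $n\ge 2$, the group $\pi_1(M)$ is trivial, and by \propref{p:deg} the map $f_*\colon\pi_1(M)\to\pi_1(N)$ is an epimorphism; hence $\pi_1(N)=0$, so $N$ is simply connected (and in particular orientable).

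Next I would compute $H_*(N)$. By \propref{p:deg} the map $f^*\colon H^*(N;\Z)\to H^*(M;\Z)$ is a monomorphism. Since $M$ is a homotopy $n$-sphere, $H^i(M;\Z)=0$ for $0<i<n$, so $H^i(N;\Z)=0$ for $0<i<n$; as $N$ is simply connected (so $H_1(N;\Z)$ is torsion-free), the universal coefficient theorem then gives $\widetilde H_i(N;\Z)=0$ for $0<i<n$, while $H_n(N;\Z)\cong\Z$ because $N$ is a closed connected orientable $n$-manifold. Now $N$ is simply connected with $\widetilde H_i(N;\Z)=0$ for $i<n$, so the Hurewicz theorem shows that $N$ is $(n-1)$-connected and $\pi_n(N)\cong H_n(N;\Z)\cong\Z$. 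Choosing $g\colon S^n\to N$ representing a generator of $\pi_n(N)$, the induced map on integral homology is an isomorphism in every degree, and since both spaces are simply connected CW complexes, Whitehead's theorem makes $g$ a homotopy equivalence. Thus $N\cong S^n$.

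I do not anticipate a real obstacle: the only points needing mild care are the reduction to a simply connected situation (so that Hurewicz and Whitehead apply directly) and the trivial low-dimensional case $n\le 1$. It is worth stressing that this argument is valid in all dimensions and does not invoke the inequality $\cat M\ge\cat N$, which is presently known only for $n\le 4$; one could alternatively route the proof through the preceding proposition by verifying $\cat N=1$, but the direct argument above is shorter.
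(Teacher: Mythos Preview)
Your argument is correct and follows essentially the same route as the paper: both use \propref{p:deg} to transfer the homological and fundamental-group data of a homotopy sphere across a degree-$\pm 1$ map, and then conclude that $N$ is a homotopy sphere. The only minor difference is that the paper invokes the \emph{homology} epimorphism $f_*\colon H_*(M)\twoheadrightarrow H_*(N)$ from \propref{p:deg} directly (so $H_k(M)=0$ for $k\neq 0,n$ immediately gives $H_k(N)=0$), whereas you pass through the cohomology monomorphism and then recover homology via UCT; the paper's choice saves you that extra step, and your explicit appeal to Hurewicz--Whitehead simply spells out what the paper leaves implicit.
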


\begin{proof}
    Let $\dim M=n$. We have $H_k(M)=0$ for $k\neq 0, n$ because $M$ is a homotopy sphere. Hence  $H_k(N)=0$ for $k\neq 0, n$ because of \propref{p:deg}. Furthermore, $\pi_1(N)=0$ by \propref{p:deg}.
\end{proof}

\section{Preliminares on 3-manifolds.}

A 3-manifold $M$ is {\em irreducible} if every embedded two-sphere $S^2\hookrightarrow M$
 bounds an embedded disk $D^3$ in $M$

\m A 3-manifold $M$ is {\em prime} if it cannot be expressed as a connected sum  $\displaystyle N_1\#N_2$ 
of two manifolds neither of which is the 3-sphere $S^3$.

\m Clearly, each irreducible $M^3$  is prime. A partial converse looks as follows.

\begin{lemma}\label{l:prim-not-irr}
If a 3-manifold $M$ is prime and not irreducible, then $M$ is a 2-sphere bundle over $S^1$. Furthermore, if $N$ is orientable then $M=S^1\times S^2$.
\end{lemma}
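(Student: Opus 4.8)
The plan is to use the failure of irreducibility to fix an embedded $2$-sphere $S\subset M$ that does not bound an embedded $3$-ball in $M$, and then to pin down how $S$ sits inside $M$ using primeness. The first step is to show that $S$ must be non-separating. If $S$ separated $M$ into compact submanifolds $M_1,M_2$ with $\pa M_1=\pa M_2=S$, then capping each $M_i$ off with a $3$-ball along its boundary sphere produces closed $3$-manifolds $N_1,N_2$ with $M=N_1\#N_2$. Since $M$ is prime, one of them, say $N_1$, is $S^3$; but then $M_1=N_1\setminus\Int D^3$ is a $3$-ball, so $S=\pa M_1$ bounds a ball in $M$, contradicting the choice of $S$. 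Hence $S$ is non-separating.

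Since $S$ is non-separating, there is an embedded arc $\gamma\subset M$ meeting $S$ transversally in exactly one point, with its two endpoints lying on opposite local sides of $S$. A closed regular neighborhood $B$ of $S\cup\gamma$ is then a $2$-sphere bundle over $S^1$ with an open $3$-ball removed; in particular $\pa B$ is a single $2$-sphere $S'$. Writing $M=B\cup_{S'}W$ with $W=\ov{M\setminus B}$ and capping each copy of $S'$ off with a $3$-ball gives closed manifolds $\wh B,\wh W$ with $M=\wh B\#\wh W$, where $\wh B$ is the closed $S^2$-bundle over $S^1$ determined by $B$. Because $H_1(\wh B;\Z)\cong\Z$, the manifold $\wh B$ is not $S^3$, so primeness of $M$ forces $\wh W=S^3$, i.e. $W$ is a $3$-ball. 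Hence $M=\wh B$ is a $2$-sphere bundle over $S^1$. Finally, up to isomorphism there are exactly two such bundles, classified by $\pi_0(\operatorname{Diff}(S^2))\cong\Z/2$: the trivial bundle $S^1\times S^2$, which is orientable, and the mapping torus of an orientation-reversing diffeomorphism of $S^2$, which is non-orientable. Thus, if $M$ is orientable (the "$N$" in the statement is read as $M$), then $M=S^1\times S^2$.

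I expect the main obstacle to be the middle step: carefully producing the arc $\gamma$ and verifying that a regular neighborhood of $S\cup\gamma$ is genuinely an $S^2$-bundle over $S^1$ with an open ball removed and with connected (spherical) boundary — the bundle being twisted or untwisted according to whether $\gamma$ reverses or preserves a local coorientation of $S$ — together with the connected-sum bookkeeping needed to bring primeness to bear. The separating case and the final classification of $S^2$-bundles over $S^1$ are comparatively routine.
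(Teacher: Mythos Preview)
Your argument is correct and is essentially the standard proof (the one in Hempel) that the paper invokes by citation: the paper simply refers to \cite[Lemma~3.13]{H} for the claim that a prime non-irreducible $3$-manifold is an $S^2$-bundle over $S^1$, and then, exactly as you do, notes that of the two such bundles only $S^1\times S^2$ is orientable. So you have spelled out in detail what the paper outsources to Hempel, with the same concluding step.
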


\begin{proof}
For the first claim see~\cite[Lemma 3.13]{H}. Hence, up to diffeomorphism, $M$ is either $S^1\times S^2$ or the mapping torus of the map
\[
h: S^2\to S^2, \quad h(x)=-x
\]
where $S^2$ is regarded as the set of unit vectors in $\R^3$. So, since the mapping torus is non-orientable, we conclude that $M=S^1\times S^2$.
\end{proof}

\begin{prop}\label{p:free}If M is a closed 3-manifold with non-trivial free fundamental
group, then M is not irreducible.
\end{prop}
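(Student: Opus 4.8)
The plan is to proceed by contradiction: assume $M$ is irreducible and show this forces $M$ to be aspherical, which is impossible when $\pi_1(M)$ is a non-trivial free group. First I would record that a non-trivial free group is infinite, so the universal covering $\widetilde M\to M$ is infinite-sheeted; hence $\widetilde M$ is a connected, non-compact $3$-manifold without boundary.

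Next I would deduce $\pi_2(M)=0$ from irreducibility. If $\pi_2(M)$ were nonzero, the Sphere Theorem (Papakyriakopoulos; see \cite[Ch.~4]{H}) would yield an embedded $2$-sphere in $M$ representing a nontrivial element of $\pi_2(M)$; but by irreducibility every embedded $2$-sphere bounds a $3$-ball and is therefore null-homotopic -- a contradiction. So $\pi_2(M)=0$, and thus $\pi_2(\widetilde M)=0$ as well. Now $\widetilde M$ is simply connected with $\pi_2(\widetilde M)=0$, so the Hurewicz theorem gives $H_1(\widetilde M;\Z)=H_2(\widetilde M;\Z)=0$, while $H_k(\widetilde M;\Z)=0$ for $k\geq 3$ since $\widetilde M$ is a non-compact $3$-manifold. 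A simply connected CW complex with trivial reduced homology is contractible, so $\widetilde M$ is contractible and $M$ is aspherical, i.e.\ $M\cong K(\pi_1(M),1)$. (Equivalently, one can invoke directly the classical fact that a closed irreducible $3$-manifold with infinite fundamental group is aspherical.)

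Finally I would exploit asphericity. It yields $H_*(M;\Z/2)\cong H_*(\pi_1(M);\Z/2)$. Since $\pi_1(M)$ is free, its classifying space has the homotopy type of a wedge of circles, whence $H_k(\pi_1(M);\Z/2)=0$ for all $k\geq 2$; in particular $H_3(M;\Z/2)=0$. On the other hand $M$ is a closed connected $3$-manifold, so $H_3(M;\Z/2)\cong\Z/2$ (the mod $2$ fundamental class). This contradiction completes the proof.

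The delicate step is the middle one: passing from irreducibility to $\pi_2(M)=0$, and hence to asphericity, via the Sphere Theorem. In the non-orientable case one should be slightly careful -- for instance, reduce to the orientable situation by passing to the orientation double cover $\widehat M$, noting that $\pi_2$ is unchanged, that $\pi_1(\widehat M)$ is a finite-index (hence infinite, hence free) subgroup of $\pi_1(M)$, and that $\widehat M$ is again irreducible. Once asphericity is in hand, the comparison of $H_3(M;\Z/2)$ with $H_3$ of a free group is immediate and there is nothing more to do.
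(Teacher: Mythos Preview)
Your argument is correct. The paper itself does not give a proof but only cites \cite[Corollary~4.6]{OR}; what you have written is precisely the classical route to that corollary: Sphere Theorem $\Rightarrow$ $\pi_2(M)=0$ for irreducible $M$, infinite $\pi_1$ and non-compact universal cover $\Rightarrow$ asphericity, and then the contradiction between $H_3(M;\Z/2)\cong\Z/2$ and the fact that a free group has cohomological dimension~$1$. So your proof is a self-contained expansion of the cited result rather than a different approach.

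One remark on the non-orientable aside: the step ``$\widehat M$ is again irreducible'' is true but not elementary---it is the statement that finite covers of irreducible $3$-manifolds are irreducible (Meeks--Simon--Yau, or the equivariant sphere theorem). In the present paper this does not matter, since \propref{p:free} is only invoked inside \lemref{l:crit=3}, where $M$ is already assumed orientable; but if you want the proposition in the generality stated you should either cite that covering result or, more simply, use $\Z/2$ coefficients together with the non-orientable form of the Sphere Theorem (which yields an essential embedded $S^2$ or a two-sided $\R\P^2$; the latter is ruled out here because $\pi_1(M)$ is free and hence torsion-free).
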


\begin{proof}
See \cite[Corollary 4.6]{OR}
\end{proof}

\begin{lemma}\label{l:crit=3}
    Let $M$ be a closed, orientable 3-manifold. If $\cat(M)=2$ then $M$ is a connected sum of copies of $S^1\ts S^2$, $M=(\#_i(S^1\ts S^2))$, and thus $\crit M=3$.
\end{lemma}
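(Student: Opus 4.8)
The plan is to first determine $M$ up to diffeomorphism from the hypothesis $\cat M = 2$, and only then compute $\crit M$.

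\emph{Step 1: the fundamental group is non‑trivial and free.} First I would note that $\pi_1(M)\neq 1$, since a simply connected closed $3$-manifold is $S^3$ by Perelman and $\cat S^3 = 1\neq 2$. The crucial point is that $\cat M = 2$ forces $\pi_1(M)$ to be a free group; this is the theorem of G\'omez--Larra\~naga and Gonz\'alez--Acu\~na computing the LS-category of $3$-manifolds. If one wants to stay closer to the tools already set up here, the same conclusion can be reached through the prime decomposition: an irreducible summand with infinite fundamental group is aspherical by the sphere theorem, hence has LS-category equal to its dimension $3$, which would force $\cat M\geq 3$; the remaining irreducible possibilities are ruled out in Step 2.

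\emph{Step 2: connected sum decomposition.} Next I would take the prime decomposition $M = P_1\#\cdots\#P_k$ into prime orientable $3$-manifolds. Van Kampen applied to the connected sum gives $\pi_1(M) = \pi_1(P_1)*\cdots*\pi_1(P_k)$, and each $\pi_1(P_j)$, being a subgroup of the free group $\pi_1(M)$, is itself free by Nielsen--Schreier. By \lemref{l:prim-not-irr}, each $P_j$ is either $S^1\ts S^2$ or irreducible. If $P_j$ is irreducible with non-trivial (free) fundamental group, \propref{p:free} says $P_j$ is not irreducible --- a contradiction; hence $\pi_1(P_j) = 1$ and $P_j = S^3$ by Perelman. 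Discarding the $S^3$-summands yields $M = \#_i(S^1\ts S^2)$, with $i\geq 1$ because $\cat S^3 = 1$.

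\emph{Step 3: $\crit M = 3$.} The lower bound is immediate: $\crit M\geq 1+\cat M = 3$ by \theoref{LS}. Also $\crit M\leq \dim M + 1 = 4$ by \theoref{tak}, so what remains is to build a smooth function on $\#_i(S^1\ts S^2)$ with exactly three critical points. I would start from the standard handle decomposition with one $0$-handle, $i$ $1$-handles, $i$ $2$-handles and one $3$-handle --- equivalently, a Morse function with one minimum, $i$ critical points of index $1$, $i$ of index $2$ and one maximum --- then slide all of the middle critical points onto a single level set and fuse them into one (in general highly degenerate) critical point, in exact analogy with the classical construction of a function with three critical points on a closed surface of positive genus. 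This leaves a function with one minimum, one degenerate critical point and one maximum, so $\crit M\leq 3$, whence $\crit M = 3$.

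I expect the fusion step in Step 3 to be the delicate point: one must verify that once the index-$1$ and index-$2$ critical points are collected on a common level they really can be replaced by a single critical point without creating new ones --- a monkey-saddle-type degeneration in dimension $3$. Note that \theoref{t:refin}, which carries out precisely this sort of fusion, is unavailable here since it requires $\dim\geq 6$ and simple connectivity, so the construction must genuinely exploit the explicit product/connected-sum structure of $M$. (Obtaining the freeness of $\pi_1$ in Step 1 without quoting the full G\'omez--Larra\~naga--Gonz\'alez--Acu\~na theorem is the other place where real $3$-manifold input is needed.)
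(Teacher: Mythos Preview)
Your identification of $M$ as $\#_i(S^1\times S^2)$ in Steps~1--2 is essentially the paper's argument: prime decomposition, freeness of $\pi_1$ via G\'omez--Larra\~naga--Gonz\'alez--Acu\~na (or \v Svarc), then \propref{p:free} and \lemref{l:prim-not-irr} to force each prime summand to be $S^1\times S^2$. Your version is slightly more explicit (Van Kampen plus Nielsen--Schreier to pass freeness to the summands), but the skeleton is the same.

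The genuine difference is Step~3. The paper does not attempt the fusion construction you sketch; it simply quotes \cite[Theorem~3.3]{Tak}, which directly gives $\crit\bigl(\#_i(S^1\times S^2)\bigr)=3$. Your instinct that the fusion is the delicate point is exactly right --- and Takens' paper is precisely where that work is done, so the paper's citation is not evading the issue but pointing to where it is carried out. Your hand-built approach (arrange a Morse function with one $0$-cell, $i$ index-$1$ points, $i$ index-$2$ points, one $3$-cell, then collapse the middle levels) is morally what underlies Takens' result, but as you note, ``slide to a common level and merge'' is not automatic: one must actually exhibit the degeneration and check no spurious critical points appear. Absent a concrete local model or an appeal to Takens, this step is a genuine gap in your write-up. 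The cleanest fix is to do what the paper does and cite Takens; if you want a self-contained argument, you would need to produce the explicit three-critical-point function on $\#_i(S^1\times S^2)$, which is doable but not a one-liner.
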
 

\begin{proof}
    By \cite[Theorem 3.15]{H} we have a decomposition 
\[
M=M_1\# \cdots \#M_n
\]
 where all manifolds $M_i$'s are primes. Note also that each manifold $M_i$ has non-trivial fundamental group by Perelman. Furthermore, since $\cat M=2$, each manifold $M_i$ has free and non-trivial fundamental group,  see \cite[p. 117, Corollary]{Sv} or \cite[Theorem 4.2]{GoGo}. 
Furthermore, each manifold $M_i$ is not irreducible by \propref{p:free}.
Hence, $M_i=S^1\ts S^2$ for all $i$ by \lemref{l:prim-not-irr}. 
\vskip 0.1cm
Summing up, if $\cat M=2$ then $M=(\#_i(S^1\ts S^2))$, and so $\crit M=3$ by \cite[Theorem 3.3]{Tak}. 
\end{proof}

 \section{3-manifolds and maps of degree 1.}\label{main}

The next theorem is the main result of this paper.
 \begin{thm}
   For all maps $f: M^n \to N^n, n\leq 3$ of degree $\pm 1$, we have $\crit M \geq \crit N$.  
 \end{thm}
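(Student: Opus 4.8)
The plan is to dispose of the cases $n\le 2$ quickly and then concentrate all the work on $n=3$, organizing the argument according to the value of $\crit N$. Since $\crit N\in\{1,2,3,4\}$ by \theoref{tak}, and $\crit N=1$ forces $N$ to be a point (so $n=0$, trivial), it suffices to handle $\crit N=2$, $\crit N=3$, $\crit N=4$ separately. For $n\le 2$ the classification of manifolds makes everything explicit: in dimension $0$ and $1$ there is nothing to prove, and in dimension $2$ every closed surface has $\crit=3$ except $S^2$ with $\crit S^2=2$; by \corref{homsphere} a degree $\pm1$ map from $M$ onto a homotopy sphere $N$ forces $M$ to be a homotopy sphere too, so $\crit M=2=\crit N$, while if $\crit N=3$ the bound $\crit M\ge 3$ is automatic unless $M=S^2$, and a degree $\pm1$ map $S^2\to N$ would force $N$ to be a homotopy $2$-sphere, i.e. $\crit N=2$, a contradiction.

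For $n=3$ the low end is again easy. If $\crit N=2$ then $N$ is a homotopy sphere by \corref{reeb}; but $f\colon M\to N$ has degree $\pm1$, and reversing the roles is not available, so instead I argue directly that $\crit M\ge 2$ always (a closed manifold is not contractible, so $\cat M\ge 1$ and \theoref{LS} gives $\crit M\ge 2$), which settles $\crit N=2$. If $\crit N=4$ then, since $\Crit N\le \dim N+1=4$ with equality, we are in the boundary case of \theoref{tak}; I would use \propref{p:crit=1+dim}-type reasoning in reverse: $\crit N=4$ together with $\cat N\le \dim N=3$ and $\crit N\le\cat N+2$ (the empirical bound is not available as a theorem, so instead) — more carefully, $\crit N=4=\dim N+1$ forces $\cat N=3=\dim N$ via $\cat N+1\le\crit N\le\dim N+1$, hence $\cat N=\dim N$. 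Now I invoke the degree-$1$ category inequality $\cat M\ge\cat N$ (valid for $n\le 4$, stated in the excerpt), so $\cat M\ge 3$, and then $\crit M\ge\cat M+1\ge 4=\crit N$ by \theoref{LS}.

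The remaining case $\crit N=3$ is the heart of the matter. Here $\crit N=3$ means $\cat N\in\{1,2\}$. If $\cat N=1$ then $N$ is a homotopy $3$-sphere, hence (Perelman) $N\cong S^3$, hence $\crit N=2$, contradicting $\crit N=3$; so $\cat N=2$. By \lemref{l:crit=3} the structure of $N$ is pinned down only in the orientable case, but a degree-$\pm1$ map already forces us into the orientable setting or lets us pass to orientation double covers, so assume $N=\#_i(S^1\times S^2)$. I need $\crit M\ge 3$, equivalently $\cat M\ge 2$, equivalently $M$ is not a homotopy sphere. Suppose for contradiction $\cat M\le 1$; then $M$ is a homotopy $3$-sphere, so $\cong S^3$, and then $f\colon S^3\to N$ has degree $\pm1$, forcing $f_*\colon \pi_1(S^3)\to\pi_1(N)$ to be onto by \propref{p:deg} — but $\pi_1(N)$ is a nontrivial free group and $\pi_1(S^3)=0$, a contradiction. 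Hence $\cat M\ge 2$ and $\crit M\ge 3=\crit N$.

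I expect the main obstacle to be the bookkeeping around orientability and the use of the classification/\lemref{l:crit=3}: the statement allows non-orientable $M$ and $N$, whereas \propref{p:deg} and \lemref{l:crit=3} are phrased for orientable manifolds, so I will need to argue (or the authors do) that in the relevant case $N$ orientable may be assumed — e.g. because $H_1(N;\Q)\to$ something, or by pulling back the orientation character along $f$ and noting $f^*w_1(N)=w_1(M)$ for a degree-$1$ map, which at minimum lets a non-orientable-target case be excluded or reduced. Once orientability is available the rest is a clean case analysis feeding the two nontrivial inputs — the degree-$1$ category inequality in dimension $\le 4$ and the $3$-manifold structure \lemref{l:crit=3} — into \theoref{LS} and \theoref{tak}; the genuinely substantive step is recognizing that every obstruction collapses to "$M$ would have to be $S^3$ mapping onto something with nontrivial $\pi_1$," which \propref{p:deg} forbids.
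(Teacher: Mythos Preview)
Your argument in the case $\crit N=4$ contains a genuine gap. You assert that the chain $\cat N+1\le\crit N\le\dim N+1$ forces $\cat N=3$, but this chain only yields the \emph{upper} bound $\cat N\le 3$; it gives no lower bound whatsoever. The Lusternik--Schnirelmann inequality $\cat N+1\le\crit N$ goes the wrong way for what you need, and \propref{p:crit=1+dim} is an implication, not an equivalence. To deduce $\cat N=3$ from $\crit N=4$ you must rule out $\cat N\le 2$, and the only way to do that with the tools on the table is to invoke \lemref{l:crit=3} (together with the homotopy-sphere case): if $\cat N\le 2$ then $\crit N\le 3$. You do not make this step, and your text suggests you believe the bare inequalities suffice.

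The paper avoids this trap by running the argument in the opposite direction: assume $\crit M=3$, hence $\cat M\le 2$ by \theoref{LS}, hence $\cat N\le 2$ by the degree-one category inequality, and then \lemref{l:crit=3} (or the homotopy-sphere case) forces $\crit N\le 3$, contradicting $\crit N=4$. This is essentially the contrapositive of what you attempted, but the logical flow is cleaner because the LS inequality is used in its natural direction. Your treatment of $\crit N=3$ is correct but needlessly elaborate: you pass through the identification $N=\#_i(S^1\times S^2)$ and a $\pi_1$ argument, whereas the paper simply observes that $\crit M=2$ would make $M$ a homotopy sphere, hence $N$ a homotopy sphere by \corref{homsphere}, hence $\crit N=2$ --- no structure theorem needed.
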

 \begin{proof}
For closed manifolds, there is always a minimum and a maximum. So $\crit M, N\geq 2$. Also $\crit M, N \leq 4$ by \theoref{tak}. Now consider the cases of different $n$ separately.\\
     The only closed connected manifolds of dimension $1$ up to diffeomorphism is $S^1$. So, if $\dim M,N=1$ then $\crit M=\crit N=2$.\\
     Next, consider $n=2$. For $\crit N=2$, the inequality holds trivially. \\
By way of contradiction, assume that $\crit N=3$ and $\crit M=2$. We know that $M$ is a homotopy 2-sphere by \corref{reeb}. Hence, $N$ is also a homotopy 2-sphere by \corref{homsphere} and thus $N$ is the standard 2-sphere. That is a contradiction. So the result holds for $n=2$.\\
     Let $n=3$. So, by \theoref{tak}, $\crit M,N$ can be $2,3$ or $4$ only. The result holds for $\crit N=2$ trivially. Now we exclude the pair
\[
(\crit M=2,\,\crit N\geq 3).
\]
    Indeed, asserting as in case $\dim n=2$ we see that $N$ is a homotopy sphere. Using the well-known Perelman theorem on the Poincare conjecture, see e.g.~\cite{Mor} we conclude that $N$ is a standard 3-sphere, and hence $\crit N=2$. That is a contradiction. 

\m Now let $\crit N=4$ and prove that in his case $\Crit M=4$. By way of contradiction assume $\crit M=3$. By Lusternik--Schnirelman \theoref{LS} we conclude that $\cat M \leq 2$. So by ~\cite{Rud} we have $\cat N=1$ or $2$. If $\cat N=1$, then $N$ is a homotopy sphere by \corref{homsphere}. That is a contradiction. 

Finally, if $\cat N=2$ then by \lemref{l:crit=3} we have $\crit N=3$. This is a contradiction.
\end{proof}

\end{document}